\documentclass[a4paper, 12pt]{amsart}
\usepackage[ngerman, english]{babel}
\usepackage{amsfonts,amsmath}
\usepackage{amsthm}
\usepackage{txfonts}
\usepackage{faktor}
\usepackage[arrow, matrix, curve]{xy}

\newenvironment{remark}[1][Remark]{\begin{trivlist}
\item[\hskip \labelsep {\bfseries #1}]}{\end{trivlist}}

\newtheorem{theorem}{Theorem}[section]

\newtheorem{proposition}[theorem]{Proposition}

\newtheorem{definition}[theorem]{Definition}
\newtheorem{conjecture}[theorem]{Conjecture}

\newcommand{\defi}{\coloneqq}
\newcommand{\C}{\mathbb C}
\newcommand{\R}{\mathbb{R}}

\newcommand{\N}{\mathbb{N}}

\newcommand{\W}{\mathcal{W}}

\newcommand{\abs}[1]{\lvert#1\rvert}
\newcommand{\Spn}{\mathbb{S}^n}
\newcommand{\norm}[1]{\ensuremath{\left\|#1\right\|}}
\newcommand{\Sp}{\mathbb{S}}

\title[Klein bottles]{A note on Willmore minimizing Klein bottles in Euclidean space}

\author[J. Hirsch]{Jonas Hirsch}
\address[Jonas Hirsch]{Karlsruhe Institute of Technology, Institute for Analysis, Englerstr. 2, 76131 Kalrsruhe, Germany}
\email{jonas.hirsch@kit.edu}

\author[E. M\"ader-Baumdicker]{Elena M\"ader-Baumdicker}
\address[Elena M\"ader-Baumdicker]{Karlsruhe Institute of Technology, Institute for Analysis, Englerstr. 2, 76131 Kalrsruhe, Germany}
\email{elena.maeder-baumdicker@kit.edu}

\begin{document}
\begin{abstract}
 We show that $\varphi\circ \tilde\tau_{3,1}:K\to\R^4\times \{0\}^{n-4}$ is the unique minimizer among immersed Klein bottles in its conformal class, where $\varphi:\mathbb S^4\to\R^4$ is a stereographic projection and $\tilde\tau_{3,1}$ is the bipolar surface of Lawson's $\tau_{3,1}$-surface \cite{Lawson}. 
%  Furthermore, we prove that $\tilde\tau_{3,1}$ is the only minimally immersed Klein bottle into $\mathbb S^n$ for some $n\in\N$ where the coordinate functions are first eigenfunctions of the Laplacian. 
 We conjecture that $\varphi\circ\tilde\tau_{3,1}$ is the unique minimizer among immersed Klein bottles into $\R^n$, $n\geq 4$, whose existence the authors and P.\ Breuning proved in  \cite{we}.
\end{abstract}

\maketitle

%TODO mögliche Dinge zum reinschreiben: Penskoi Parametrisierung, was ist eine bipolare Fläche, da gibt es auch explizite Parametrisierungen

\section{Introduction}
Let $M$ be a closed manifold of dimension two.
The Willmore energy of an immersed surface $f:M\to\R^n$, $n\geq 3$, is defined by
\begin{align*}
 \W(f)\defi \frac{1}{4} \int_M |H|^2 d\mu_g,
\end{align*}
where $H$ is the mean curvature vector of the surface, $g\defi f^\sharp \delta_{\R^n}$ the induced metric on $M$ and $d\mu_g$ the induced area measure. \\
Willmore \cite{Willmore} proved 1965 that $\W(f)\geq 4\pi$ holds for any closed surface in $\R^3$ with equality for round spheres.  %TODO: nur codim 1?
He also computed the Willmore energy of certain tori and found out that the minimum of the Willmore energy among these tori is attained by a stereographic projection of the Clifford torus (with energy $2\pi^2$). 
%This surface can be parametrized by $$\Sigma=\left\{\left((\sqrt{2} + \cos x)\cos y, (\sqrt{2} + \cos x)\sin y,\sin x\right)\in\R^3 : x, y\in\R\right\}.$$ 
He conjectured that every orientable surface in $\R^3$ of genus greater than zero satisfies $$\W(f)\geq 2\pi^2.$$ This \emph{Willmore conjecture} was proved by Marques and Neves \cite{MarquesNevesWillmore}. Before the proof of Marques and Neves appeared a lot of partial results were obtained concerning the Willmore conjecture, see \cite{MarquesNeves} and the references therein. \\%TODO evtl mehr dazu schreiben...Ndiaye Schätzle usw?
For non-orientable surfaces the number of results concerning the Willmore energy are quite limited. Li and Yau proved in \cite{LiYau} that $W(f)\geq 6\pi$ for any immersed $\R P^2$ in $\R^n$, $n\geq 4$, with equality if and only if $f:\R P^2\to\R^4$ is the Veronese embedding. As there always is a triple point for immersed $\R P^2$ in $\R^3$ \cite{Banchoff} an inequality from \cite{LiYau} gives $\W(f)\geq 12 \pi$ with equality for example for Boy's surface \cite{Kusner, Bryant}.\\
In \cite{we}, the authors proved together with P.\ Breuning that the infimum among all immersed Klein bottles in $\R^n$, $n\geq 4$ is attained by a smooth embedding. The value of this minimum is strictly less than $8\pi$. In this paper, we prove that it is less or equal to $6 \pi \operatorname E \left(\frac{2\sqrt{2}}{3}\right)\approx 6.682 \pi$, where $\operatorname E(.)$ is the complete elliptic integral of second kind. Our first result is the following:

\begin{theorem} \label{main1}
 Consider the bipolar surface of Lawson's $\tau_{3,1}$ torus and denote it by $\tilde \tau_{3,1}$. It is known that the surface $\tilde \tau_{3,1}$ is a minimally embedded Klein bottle in $\mathbb S^4$ \cite{Lapointe}. Let $\varphi:\mathbb S^4\to\R^4$ be a stereographic projection. Then we have that $\varphi\circ \tilde \tau_{3,1}:K\to\R^4\times \{0\}^{n-4}$, $n\geq 4$, is the minimizer of the Willmore energy in its conformal class, i.e.\ we have that
 \begin{align}\label{ungl}
  \W(f)\geq 6 \pi \operatorname E \left(\frac{2\sqrt{2}}{3}\right)\approx 6.682 \pi
 \end{align}
for every immersed Klein bottle $f:K\to \R^n$, $n\geq 4$, that is conformal to $\varphi\circ\tilde \tau_{3,1}$. Here, $\operatorname E(.)$ is the complete elliptic integral of second kind.
Furthermore, equality in (\ref{ungl}) for an immersed Klein bottle $f$ conformal to $\varphi\circ \tilde \tau_{3,1}$ implies that $f$ is the surface $\varphi\circ \tilde \tau_{3,1}$ up to conformal diffeomorphisms of $\R^n$.
\end{theorem}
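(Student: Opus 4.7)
\textit{Strategy.} My plan is a Hersch--Li--Yau type argument combining the conformal invariance of $\W$, the $\lambda_1$-extremality of the bipolar Lawson metric on $K$ (through the spectral input in \cite{Lapointe}), and the standard comparison between $\W$ and area after stereographic passage to the sphere. Writing $g_0\defi\tilde\tau_{3,1}^\sharp g_{\mathbb{S}^4}$, the goal is to prove the single chain $\W(f)\geq\tfrac{1}{2}\lambda_1(g_0)\operatorname{Area}(g_0)=\operatorname{Area}(\tilde\tau_{3,1})=6\pi\operatorname{E}(2\sqrt{2}/3)$ and then to analyze the equality case.

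\textit{The reference value.} First I would verify $\W(\varphi\circ\tilde\tau_{3,1})=6\pi\operatorname{E}(2\sqrt{2}/3)$. By conformal invariance one has $\W(\varphi\circ\tilde\tau_{3,1})=\W(\tilde\tau_{3,1})$ viewed in $\R^5$, and the Gauss decomposition $H^{\R^5}=H^{\mathbb{S}^4}-2\tilde\tau_{3,1}$ together with the minimality of $\tilde\tau_{3,1}$ in $\mathbb{S}^4$ gives $|H^{\R^5}|^2\equiv 4$, so $\W(\tilde\tau_{3,1})=\operatorname{Area}(\tilde\tau_{3,1})$; the explicit numerical value is the classical area formula for the bipolar Lawson surface.

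\textit{The lower bound.} Let $f\colon K\to\R^n$ be conformal to $\tilde\tau_{3,1}$. Inverse stereographic projection yields a conformal immersion $F\colon K\to\mathbb{S}^n\subset\R^{n+1}$ in the class $[g_0]$ with $\W(F)=\W(f)$. By Hersch's lemma, applied to the probability measure $\operatorname{Area}(g_0)^{-1}F_\ast d\mu_{g_0}$ on $\mathbb{S}^n$, I can choose a Möbius transformation $T$ of $\mathbb{S}^n$ so that the coordinates of $\tilde F\defi T\circ F$ are orthogonal in $L^2(K,g_0)$ to the constants. Using $\lambda_1(g_0)=2$, the Rayleigh characterization applied to each $\tilde F^i$, summation, and the identity $\sum_i(\tilde F^i)^2\equiv 1$ on $\mathbb{S}^n$ then give
\begin{equation*}
2\operatorname{Area}(g_0)=\lambda_1(g_0)\operatorname{Area}(g_0)\leq\sum_i\int_K|\nabla^{g_0}\tilde F^i|^2\,d\mu_{g_0}=\int_K|d\tilde F|_{g_{\tilde F}}^2\,d\mu_{g_{\tilde F}}=2\operatorname{Area}(g_{\tilde F}),
\end{equation*}
where I invoked conformal invariance of the two-dimensional Dirichlet integrand. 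Combined with the Willmore--area inequality $\W(\tilde F)\geq\operatorname{Area}(g_{\tilde F})$, which follows from $|H_{\tilde F}^{\R^{n+1}}|^2=|H_{\tilde F}^{\mathbb{S}^n}|^2+4$, the chain closes to $\W(f)\geq 6\pi\operatorname{E}(2\sqrt{2}/3)$.

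\textit{Rigidity and the main obstacle.} The equality discussion is where I expect the main work. Reading the chain backwards, equality forces $\tilde F$ to be minimal in $\mathbb{S}^n$ and each $\tilde F^i$ to be a first eigenfunction of $(K,g_0)$; the conformal transformation law $\Delta_{g_{\tilde F}}=e^{-2v}\Delta_{g_0}$ in dimension two then forces $g_{\tilde F}=g_0$, so that $\tilde F$ is an isometric $\lambda_1$-minimal immersion of $(K,g_0)$ into $\mathbb{S}^n$. The essential remaining step is to argue that the $\lambda_1$-eigenspace of $(K,g_0)$ is precisely five-dimensional and spanned by the components of $\tilde\tau_{3,1}$, so that $\tilde F=A\circ\tilde\tau_{3,1}$ for some $A\in O(n+1)$; this multiplicity/rigidity statement for the bipolar Lawson Klein bottle is the crucial input I would take from \cite{Lapointe}. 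With that in hand, unwinding $T$ and the stereographic projection identifies $f$ with $\varphi\circ\tilde\tau_{3,1}$ up to a conformal diffeomorphism of $\R^n$, which is the claimed uniqueness.
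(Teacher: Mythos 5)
Your proposal is correct and follows essentially the same route as the paper: your lower-bound chain (Hersch balancing, the Rayleigh quotient with $\lambda_1(g_0)=2$, conformal invariance of the Dirichlet energy, and $\tfrac14|H^{\R^{n+1}}|^2=\tfrac14|H^{\Spn}|^2+1$) is precisely the Li--Yau conformal-volume argument that the paper invokes as a black box via \cite[Lemma~1, Corollary~4]{LiYau}, and your equality analysis (Takahashi's theorem plus the two-dimensional conformal transformation law of the Laplacian forcing the conformal factor to vanish, then multiplicity $5$ to pin down $\tilde\tau_{3,1}$) is exactly the $m=2$ case of the paper's Proposition~\ref{prop:rigidity1} followed by Proposition~\ref{prop.rigidity}. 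Two small corrections: the spectral inputs you need --- embedding by first eigenfunctions, $\lambda_1 V(K,g_0)=12\pi\operatorname{E}\bigl(\tfrac{2\sqrt2}{3}\bigr)$, and multiplicity $5$ --- are due to Jakobson--Nadirashvili--Polterovich \cite{Jakobson}, not \cite{Lapointe} (which only identifies the topological type), and where you defer the passage from ``components lie in the $5$-dimensional eigenspace'' to ``$\tilde F=A\circ\tilde\tau_{3,1}$ with $A$ orthogonal'' to the literature, the paper argues it directly by $L^2$-diagonalizing the Gram matrix of the components and counting: $\W(f)<8\pi$ forces the minimal surface to be embedded, a Klein bottle cannot be embedded in $\mathbb S^3$, so at least five components are nontrivial, while the multiplicity bound allows at most five.
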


In the proof, we use the conformal volume studied by Li and Yau \cite{LiYau} and a result by Jakobson, Nadirashvili and Polterovich \cite{Jakobson} who found out that $\tilde\tau_{3,1}$ is embedded by first eigenfuctions of the Laplacian. El Soufi, Giacomini and Jazar \cite{ElSoufi2} proved that the metric on $\tilde\tau_{3,1}$ is the only metric on a Klein bottle that is critical for the first eigenvalue. This implies
% Using a uniqueness result of El Soufi, Giacomini and Jazar \cite{ElSoufi2} we get the following result:

\begin{theorem}[El Soufi, Giacomini, Jazar \cite{ElSoufi2}] \label{main2}
 The only minimal Klein bottle immersed into $\mathbb S^n$ (for some $n\in\N$) by the first eigenfuctions of the Laplacian is Lawson's bipolar surface $ \tilde \tau_{3,1}$.
\end{theorem}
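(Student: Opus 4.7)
The plan is to deduce Theorem~\ref{main2} from the extremal-metric uniqueness result of El Soufi--Giacomini--Jazar via Takahashi's classical characterization of minimal immersions into spheres. First I would translate the hypothesis: by Takahashi's theorem, an isometric immersion $f\colon(M,g)\to\Sp^n\subset\R^{n+1}$ of a closed surface is minimal if and only if the coordinate functions of $f$ satisfy $\Delta_g f = 2 f$. Hence requiring $f\colon K\to\Sp^n$ to be a minimal immersion by first eigenfunctions of the Laplacian is equivalent to requiring the induced metric $g$ to satisfy $\lambda_1(g)=2$, with every component of $f$ lying in the first eigenspace $E_1(g)$.

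Next I would invoke the correspondence, going back to Nadirashvili and formalised by El Soufi--Ilias, which states that any metric on a closed surface admitting a minimal immersion by first eigenfunctions into some sphere is automatically a critical point of the functional $h\mapsto\lambda_1(h)\operatorname{Area}(M,h)$ on the space of smooth Riemannian metrics on $M$. Applied to $M=K$, this together with the quoted theorem of El Soufi--Giacomini--Jazar pins down $g$, up to isometry and homothety, as the metric induced on Lawson's bipolar surface $\tilde\tau_{3,1}\hookrightarrow\Sp^4$.

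Finally, I would upgrade uniqueness of the metric to uniqueness of the immersion. Since each coordinate of $f$ lies in $E_1(g)$, the image of $f$ is contained in a linear subspace of $\R^{n+1}$ of dimension at most $\dim E_1(g)$. For the $\tilde\tau_{3,1}$ metric, Jakobson--Nadirashvili--Polterovich showed $\dim E_1(g)=5$, with a basis given precisely by the ambient coordinates of $\tilde\tau_{3,1}\colon K\to\Sp^4$. Combined with the constraint $|f|^2\equiv 1$ and the fact that $f$ is an isometric immersion of $(K,g)$, this forces $f$ to be the composition of $\tilde\tau_{3,1}$ with an orthogonal transformation of $\R^5\subset\R^{n+1}$, i.e.\ with an isometry of the ambient sphere.

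The main obstacle I expect is this last step: the extremal-metric theorem controls $g$ alone, so one still has to rule out essentially different minimal immersions inducing the same metric. This is where the rigidity of the first eigenspace of the $\tilde\tau_{3,1}$ metric, and the explicit identification of a basis with the coordinates of $\tilde\tau_{3,1}$ itself, becomes essential; without that computation the reduction to extremal metrics only yields the metric but not the immersion.
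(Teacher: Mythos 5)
Your overall strategy is the paper's own: translate ``minimal immersion by first eigenfunctions'' into extremality of the induced metric for $\lambda_1$ (the paper quotes \cite[Proposition~1.1]{ElSoufi3}, which also yields $\lambda_1(g)=2$), apply the El Soufi--Giacomini--Jazar uniqueness theorem (Theorem~\ref{uniqueMetric}) to conclude $g=g_0$, and then upgrade uniqueness of the metric to uniqueness of the immersion using that the first eigenspace of $g_0$ is $5$-dimensional and spanned by the coordinates of $\tilde\tau_{3,1}$. Your first two steps match the paper exactly.

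The gap is in your last step. From ``each coordinate of $f$ lies in $E_1(g_0)$ with $\dim E_1(g_0)=5$, together with $|f|^2\equiv 1$ and $f$ isometric'' you conclude that $f$ is $\tilde\tau_{3,1}$ composed with an orthogonal transformation. As stated this does not follow: writing $f=Au$, where $u$ is the coordinate vector of $\tilde\tau_{3,1}$ and $A$ is an $(n+1)\times 5$ matrix, your hypotheses amount to $u^{T}\left(A^{T}A-I\right)u\equiv 0$ pointwise plus the analogous identity for the differentials, and in do Carmo--Wallach type moduli problems such pointwise constraints do not by themselves force $A^{T}A=I$. In particular, nothing in your argument excludes a non-full solution, i.e.\ a minimal immersion of $(K,g_0)$ by only \emph{four} first eigenfunctions into $\mathbb S^3$, which could never be congruent to the full embedding $\tilde\tau_{3,1}\subset \mathbb S^4$. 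The paper closes exactly this hole in the proof of Proposition~\ref{prop.rigidity}, which the proof of Theorem~\ref{main2} reuses: after an orthogonal rotation diagonalising the Gram matrix $\int \hat f^i \hat f^j\, d\mu_g$, one observes that the area equals $V(K,g_0)$, so the Willmore energy of the stereographic image is $6\pi \operatorname E\left(\frac{2\sqrt 2}{3}\right)\approx 6.682\pi < 8\pi$; by Li--Yau this forces the map to be an embedding, and since a Klein bottle cannot be embedded into $\mathbb S^3$, at least five coordinates must be nontrivial, while the multiplicity bound $\dim E_1(g_0)=5$ caps the count at five. You correctly flagged the rigidity of the first eigenspace as the crux, but the concrete missing ingredient is this embeddedness-plus-topology argument for fullness; without it, your observation that the image lies in a subspace of dimension \emph{at most} $5$ is strictly weaker than what the conclusion requires.
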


\begin{remark}
 The analog result for tori was proved by Montiel and Ros \cite{Montiel}: The only minimal torus immersed into $\mathbb S^3$ by the first eigenfunctions is the Clifford torus. Note that the situation for tori is somehow different. There is another torus immersed by first eigenfunctions, the standard embedding of the equilateral torus, into $\Sp^5$. This surface has Willmore energy $\frac{4\pi^2}{\sqrt{3}}$. Both metrics are extremal for the first eigenvalue on a torus (for the definition see Section~\ref{Section2}). The metric on the equilateral torus is the maximum point of $\lambda_1 V$ \cite{Nadi}, and the metric on the Clifford torus is a saddle point of $\lambda_1 V$. These two surfaces are the only tori that are immersed by first eigenfunctions into some $\Sp^n$ \cite{ElSoufi3}.
\end{remark}

These theorems let us think that $ \tilde \tau_{3,1}$ is ``the best'' Klein bottle concerning its Willmore energy.

\begin{conjecture}\label{conj1}
 The unique minimizer among all immersed Klein bottles in $\R^n$, $n\geq 4$, is  $\varphi\circ \tilde\tau_{3,1}$. 
 %If the conjecture is true then the infimum among all immersed Klein bottles in $\R^4$ would be 
% $$6 \pi \operatorname E \left(\frac{2\sqrt{2}}{3}\right)\approx 6.682 \pi,$$
% where $\operatorname E(.)$ is the complete elliptic integral of second kind.
%  is defined by
%  \begin{align*}
%   T_{1,0,2}(x,y)= 
%   \begin{pmatrix}
%    \frac{1}{\sqrt{2}} \sin x\sin y \\ \frac{1}{\sqrt{2}} \cos x\sin y \\ \sqrt{\frac{5}{8}} \cos y \\ \sqrt{\frac{3}{8}} \sin 2 x \sqrt{1 + \frac{1}{3} \sin^2 y}\\ \sqrt{\frac{3}{8}} \cos 2 x \sqrt{1 + \frac{1}{3} \sin^2 y}
%   \end{pmatrix}\, .
%  \end{align*}
%  This surfaces was constructed in ??Penskoi and it was shown to be isometric to the bipolar Lawson Klein bottle $\tilde\tau_{3,1}$ ??Lawson.

\end{conjecture}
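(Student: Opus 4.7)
The plan is to reduce the conjecture to Theorem~\ref{main1}. By \cite{we}, a smooth embedded Willmore minimizer $f_0:K\to\R^n$ exists with $\W(f_0)<8\pi$, so it suffices to show $f_0$ is conformal to $\varphi\circ\tilde\tau_{3,1}$: Theorem~\ref{main1} then identifies $f_0$ with $\varphi\circ\tilde\tau_{3,1}$ up to M\"obius transformations.

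My first step would be to place the minimizer in canonical form. Exploiting that $f_0$ is a Willmore embedding with $\W(f_0)<8\pi$, a M\"obius inversion of $\R^n$ centred at an appropriate point off $f_0(K)$ should yield, in the spirit of known small-energy reductions for Willmore surfaces, a minimal immersion $\tilde f_0:K\to\Sp^m$ for some $m$. The resulting minimal Klein bottle has coordinate functions satisfying $\Delta\tilde f_0=-\mu\tilde f_0$ for some Laplace eigenvalue $\mu>0$; that is, $\tilde f_0$ is immersed by $\mu$-eigenfunctions.

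The next step would be to argue that $\mu=\lambda_1$, after which Theorem~\ref{main2} pinpoints $\tilde f_0=\tilde\tau_{3,1}$ and completes the proof. Here I would invoke a Hersch-style balancing: were $\mu>\lambda_1$, one could postcompose $\tilde f_0$ with a conformal automorphism of $\Sp^m$ that makes its coordinates $L^2$-orthogonal to the $\lambda_1$-eigenspace, obtaining a conformal competitor of strictly smaller area and hence, using conformal invariance of $\W$ together with the Li-Yau inequality relating $\W$, the conformal volume, and $\lambda_1 V$, a Willmore immersion of strictly smaller energy than $f_0$, contradicting global minimality.

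The main obstacle is the reduction of $f_0$ to a minimal immersion in a sphere in the non-orientable setting. M\"obius transformations on $\R^n$ must be chosen compatibly with the $\Z/2$ involution governing the orientable double cover of $K$, which constrains the available normalizations and makes the standard sphere-inversion argument more delicate than in the orientable case. Moreover, even granted the reduction, ruling out minimal Klein bottles in $\Sp^m$ immersed by higher eigenfunctions of Willmore energy below $6\pi\operatorname{E}(2\sqrt{2}/3)$ seems to require either a full classification of Willmore Klein bottles of energy less than $8\pi$, or a $\Z/2$-equivariant min-max construction in the spirit of Marques-Neves \cite{MarquesNevesWillmore}. Either route appears substantially harder than in the orientable Willmore conjecture and is the principal missing piece.
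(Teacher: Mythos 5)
The statement you are trying to prove is Conjecture~\ref{conj1}: the paper offers \emph{no} proof of it, and your proposal does not close it either, so there is nothing here that matches or improves on an argument in the paper. The paper only establishes two pieces of evidence: minimality and uniqueness in the fixed conformal class of $\varphi\circ\tilde\tau_{3,1}$ (Theorem~\ref{main1}) and uniqueness of $\tilde\tau_{3,1}$ among minimal Klein bottles in spheres immersed by first eigenfunctions (Theorem~\ref{main2}, via the extremal-metric rigidity of \cite{ElSoufi2}). Your reduction scheme (minimizer $\Rightarrow$ conformally minimal in a sphere $\Rightarrow$ immersed by first eigenfunctions $\Rightarrow$ equal to $\tilde\tau_{3,1}$ by Theorem~\ref{main2}) is the natural outline, but both intermediate arrows are unproved, and your arguments for them would fail as stated.

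First, there is no ``small-energy reduction'' turning a Willmore minimizer $f_0$ with $\W(f_0)<8\pi$ into an inversion of a minimal surface in $\Sp^m$. A minimizer satisfies the Willmore Euler--Lagrange equation, which is strictly weaker than being conformally minimal; Bryant's result is a genus-zero classification, not a general principle, and for tori in $\Sp^3$ establishing conformal minimality of the minimizer was precisely the content of the Marques--Neves min-max proof \cite{MarquesNevesWillmore}, not a consequence of $\W<8\pi$. (Note also that \cite[\textsection 5 Lemma 1]{LiYau}, used in Proposition~\ref{prop.rigidity}, produces a minimal comparison surface only after one already knows $\W(f)=V_c$, which is exactly what is available inside the conformal class of $\tilde\tau_{3,1}$ and unavailable for an arbitrary minimizer.) Second, the Hersch-style balancing step is broken: for a minimal surface in $\Sp^m$ the coordinates automatically satisfy $\Delta \hat f = 2\hat f$, and the issue is whether $2=\lambda_1(g)$ or $2=\lambda_k(g)$ with $k\geq 2$. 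If $2$ is a higher eigenvalue, postcomposing with a conformal automorphism $\gamma$ of $\Sp^m$ decreases area but \emph{preserves} the Willmore energy, since $\W$ is conformally invariant; indeed by \cite{LiYau} (and \cite{ELSoufi}) the area of a minimal surface is the supremum of area over its conformal orbit, so no competitor of strictly smaller Willmore energy arises and no contradiction with minimality of $f_0$ is obtained. Moreover the finite-dimensional conformal group can only balance against constants (Hersch's trick), not against the full $\lambda_1$-eigenspace. Ruling out minimal Klein bottles in $\Sp^m$ immersed by higher eigenfunctions with area below $6\pi\operatorname E\left(\frac{2\sqrt 2}{3}\right)$ is, as you correctly suspect at the end, an open problem, and it is one of two missing pieces rather than the only one.
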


In Section~\ref{Section2} we recall some definitions and prove Theorem~\ref{main1} and Theorem~\ref{main2}.

\section*{Acknowledgment}
We want to thank Tobias Lamm for very good hints during coffee time.

\section{Extremal metrics on a Klein bottle and the Willmore energy} \label{Section2}

Before we prove Theorem~\ref{main1} and Theorem~\ref{main2} we recall some definitions and required results. Lawson constructed in his work \cite{Lawson} important families of minimal immersions into $\Sp^3$. One of them are the tori (and Klein bottles) $\tau_{m,k}$. He gave an explicit parametrization: To each unordered pair of positive integers $\{m,k\}$ with $(m,k)=1$  there corresponds a distinct compact minimal surface $\tau_{m,k}$ given by 
\begin{align}
\Psi_{m,k} & :\R^2\to\Sp^3,\\
 \Psi_{m,k}(x,y) &= (\cos mx \cos y, \sin mx \cos y,\cos kx \sin y,\sin kx \sin y).
\end{align}
If $m$ and $k$ are odd, then $\tau_{m,k}$ is a torus parametrized over $\C / \Gamma$, where $\Gamma$ is the lattice generated by $(\pi,\pi)$ and $(\pi,-\pi)$. If $m$ is even, then $\tau_{m,k}$ is invariant under the actions $ (x,y)\mapsto (x + \pi, -y)$ and $(x,y)\mapsto (x,y + 2\pi)$. If $k$ is even then it is invariant under $(x,y)\mapsto (x + \pi, \pi-y) $ and $(x,y)\mapsto (x,y + 2\pi)$. In both cases the surface is a Klein bottle.\\
Lawson studied in \cite[Chapter~11]{Lawson} bipolar surfaces of his $\tau_{m,k}$-surfaces. Let $M$ be a manifold of dimension two and $\psi:M\to \Sp^3\subset \R^4$ a minimal immersion. Denote by $\psi^\ast:M\to\Sp^3\subset \R^4$ its associated Gauss map. Then
$\tilde\psi:M\to\Sp^5\subset \R^6$,
\begin{align*}
 \tilde\psi\defi \psi\wedge\psi^\ast
\end{align*}
is a minimal immersion into $\Sp^5$. Lawson showed that the image of $\tilde\tau_{m,k}$ actually lies in a plane intersected with $\Sp^5$ if $\{m,k\}\not =\{1,1\}$. Thus, $\tilde\tau_{m,k}$ can be regarded as a an immersion into $\Sp^4$. The surface $\tilde\tau_{1,1}$ lies in $\Sp^3$. Lapointe studied these surfaces in \cite{Lapointe}. He showed that $\tilde\tau_{m,k}$ is a Klein bottle if $mk=3\mod 4$. If $mk=0\mod 2$ or $mk=1\mod 4$ then $\tilde\tau_{m,k}$ is a torus.

For our work it is important to get the connection between minimal surfaces in $\Sp^n$ and so called extremal metrics. We refer to the article \cite{Penskoi1} for an introduction and an overview of recent progress in this field. If $M$ is a closed $m$-dimensional manifold and $g$ a Riemannian metric on $M$. The associated Laplacian $\Delta: C^\infty(M)\to\C^\infty(M)$ is given in local coordinates by
\begin{align*}
 \Delta f = -\frac{1}{\sqrt{\det g}} \partial_i \left(\sqrt{\det g}\, g^{ij} \partial_j f\right).
\end{align*}
The spectrum of $\Delta$ is discrete, non-negative and tends to infinity. We denote it by
\begin{align*}
 \sigma(M,g) =\{0<\lambda_1(g)\leq \lambda_2(g)\leq \cdots \leq \lambda_k(g)\leq \cdots\},
\end{align*}
where the eigenvalues are repeated according to their multiplicities. Note that $g\mapsto \lambda_k(g) V(M, g)^{\frac{2}{m}}$ is scaling invariant, where $V(M,g)$ is the volume of $M$ with respect to $g$. Denote by $\mathcal M(g)$ the set of all Riemannian metrics on $M$ having the same volume as $g$. The map $g\mapsto \lambda_k(g) V(M,g)^{\frac{2}{m}}$ is not differentiable on $\mathcal M(g)$ but there is a natural definition of extremality, see for example \cite{ElSoufi4}:
\begin{definition}
 Let $g_t$ be an volume preserving, analytic deformation of $g$ with $g_0=g$.  Then the map $t\mapsto \lambda_k(g_t)$ admits left and right derivatives at $t=0$ (see \cite{ElSoufi4}). If these derivatives satisfy 
 \begin{align*}
  \frac{d}{d t}\lambda_k(g_t)\big|_{t=0^-} \times \frac{d}{d t}\lambda_k(g_t)\big|_{t=0^+}\leq 0
 \end{align*}
then the metric is called \emph{extremal}. 
\end{definition}
For the first (non-trivial) eigenvalue $\lambda_1$ the condition to be extremal can also be formulated as $\lambda_1(M,g_t)\leq \lambda_1(M,g) + o(t)$ as $t\to 0$ for any volume preserving analytic deformation $g_t$ of $g$. The connection to minimal immersions into spheres is the following:

% \begin{theorem}
% Let $f:M\to\Sp^n\subset\R^{n+1}$ be a minimal immersion into $\Sp^n$. Then the restrictions $x^1|_M,..., x^{n+1}|_M$ of the coordinate functions to $M$ are eigenfunctions of the Laplacian $\Delta$ on $M$ with the metric induced by the $f$. The eigenfunctions correspond to the eigenvalue $\lambda= \dim M$.
% \end{theorem}

\begin{theorem}[Nadirashvili \cite{Nadi}, El Soufi, Ilias \cite{ElSoufi4}]
Let $f:M\to\Sp^n\subset\R^{n+1}$ be a minimal immersion into $\Sp^n$. Then the metric on $M$ induced by $f$ is extremal for $ \lambda_{N(\dim M)}$, where $N(\lambda)=\sharp \{\lambda_k: \lambda_k< \lambda\}$ is the Weyl eigenvalues counting function.\\
On the other hand: if a metric $g$ is extremal for $\lambda_k$ and $\lambda_k <\lambda_{k + 1}$ or $\lambda_{k-1}<\lambda_k$ then there exists a finite family $\{u_1,...,u_d\}$ of eigenfunctions associated with $\lambda_k$ such that $$\sum_{i\leq d}d u_i \otimes du_i =g.$$
This implies that the map $u\defi (u_1,...,u_d):(M,g)\to\R^d$ is an isometric immersions whose image is a minimally immersed submanifold of the sphere $\Sp^{d-1}\left(\sqrt{\frac{\dim M}{\lambda_k}}\right)$ of radius $\sqrt{\frac{\dim M}{\lambda_k}}.$
\end{theorem}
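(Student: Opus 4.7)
The plan is to prove the two directions separately. For the forward direction, I would start from the classical fact that for a minimal isometric immersion $f=(f^1,\ldots,f^{n+1}):M\to\Sp^n\subset\R^{n+1}$ each restricted coordinate satisfies $\Delta f^A = m\,f^A$ (with $m=\dim M$ and $\Delta$ the paper's positive Laplacian), while differentiating $|f|^2\equiv 1$ and using isometry gives $\sum_A df^A\otimes df^A=g$ and $\sum_A (f^A)^2=1$. Thus $m$ lies in the spectrum and the $f^A$ span a subspace of the eigenspace for $m$; this already identifies the index $k$ at which extremality is to be shown. Along an analytic volume-preserving deformation $g_t=g+th+O(t^2)$, extremality would then follow from the Berger--El Soufi--Ilias variational formula for the one-sided derivatives $\frac{d}{dt}\lambda_k(g_t)|_{t=0^\pm}$: they equal the max and min, over the $\lambda_k(g)$-eigenspace, of a quadratic form $u\mapsto -\int_M\langle du\otimes du-\tfrac{\lambda_k}{2}u^2 g,\,h\rangle\,d\mu_g$. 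The sum identities above force this form, restricted to the span of $\{f^A\}$, to pair trivially with every trace-free $h$, and a short linear-algebra argument then shows the left and right derivatives must have opposite signs, proving extremality.

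For the converse, I would assume $g$ is extremal for $\lambda_k$ with $\lambda_{k-1}<\lambda_k$ or $\lambda_k<\lambda_{k+1}$, so that $E\defi E_{\lambda_k}(g)$ is finite-dimensional and varies continuously under small perturbations of $g$. The first task is to form the closed convex set
\[
 C\defi \mathrm{conv}\{du\otimes du: u\in E,\,\|u\|_{L^2}=1\}\subset \Gamma(S^2T^*M)
\]
and show that $\tfrac{1}{m}g\in C$. If not, Hahn--Banach produces a symmetric $2$-tensor $h$ strictly separating $\tfrac{1}{m}g$ from $C$, and its trace-free part $h_0$ then satisfies $\int_M\langle du\otimes du,h_0\rangle\,d\mu_g>c>0$ for every unit $u\in E$. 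Plugging $h_0$ into the variational formula above yields an analytic volume-preserving deformation along which both one-sided derivatives of $\lambda_k$ carry the same strict sign, contradicting extremality. Once $\tfrac{1}{m}g\in C$ is established, Carath\'eodory lets me extract finitely many $u_i\in E$ with $\sum_i du_i\otimes du_i=g$ after a positive rescaling.

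Given such a family, I would set $u=(u_1,\ldots,u_d):M\to\R^d$, which is an isometric immersion by construction. The standard identity relating $\Delta_g u$ to the mean curvature vector $\vec H$ of $u(M)$ in $\R^d$, combined with $\Delta u_i=\lambda_k u_i$, forces $\vec H$ to be a nonzero scalar multiple of the position vector $u$. Since $\vec H$ is normal to $u(M)$, so is $u$, whence $\langle u,du(v)\rangle=0$ for every tangent $v$ and $|u|^2$ is constant; integrating $\sum_i u_i\Delta u_i=\lambda_k|u|^2$ against $d\mu_g$ and using $\sum_i|\nabla u_i|^2=\mathrm{tr}_g g=m$ pins $|u|^2\equiv m/\lambda_k$. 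Because $\vec H\parallel u$, the image has no mean-curvature component tangential to $\Sp^{d-1}(\sqrt{m/\lambda_k})$, so it is minimal in that sphere. The principal obstacle in this whole scheme is the Hahn--Banach step of the converse: $\lambda_k$ is only Lipschitz and merely one-sided differentiable at metrics of spectral multiplicity, so the separation argument has to be carried out against the \emph{max/min} variational formula rather than a single derivative, and this is exactly where the regularity hypothesis $\lambda_{k-1}<\lambda_k$ or $\lambda_k<\lambda_{k+1}$ is used, since it guarantees that $E_{\lambda_k}(g_t)$ varies continuously enough near $t=0$ for the deformation argument to produce a genuine contradiction.
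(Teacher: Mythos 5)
The paper itself contains no proof of this statement: it is quoted as background and attributed to Nadirashvili \cite{Nadi} and El Soufi--Ilias \cite{ElSoufi4}, so your attempt can only be measured against the proofs in those sources. Your overall architecture does match theirs: Takahashi's theorem identifies the coordinates of a minimal immersion as eigenfunctions for the eigenvalue $m=\dim M$, the one-sided-derivative quadratic form $q_h$ for analytic volume-preserving deformations gives the first implication, a convexity/separation argument on the eigenspace gives the converse, and Takahashi again yields minimality of $u(M)$ in the sphere. Your forward direction is sound in outline, up to one correction: the identities $\sum_A df^A\otimes df^A=g$ and $\sum_A (f^A)^2=1$ do not make the form ``pair trivially with every trace-free $h$''; what they give is the vanishing of its trace on the family, $\sum_A q_h(f^A)=\left(\tfrac m2-1\right)\int_M \operatorname{tr}_g h\, d\mu_g=0$ for every volume-preserving $h$, which prevents $q_h$ from being definite on the eigenspace; since the one-sided derivatives are extreme eigenvalues of $q_h$ there, their product is $\le 0$. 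Your final step (from $\Delta u_i=\lambda_k u_i$ and isometry to $\vec H\parallel u$, $|u|^2\equiv m/\lambda_k$, and minimality in $\Sp^{d-1}$) is correct.

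The genuine gap is the Hahn--Banach step of the converse. First, your convex set $C=\operatorname{conv}\{du\otimes du : u\in E,\ \|u\|_{L^2}=1\}$ drops the zeroth-order part of the very variational formula you quote: the relevant pairing is $q_h(u)=-\int_M\bigl\langle du\otimes du-\tfrac{\lambda_k}{2}u^2 g,\, h\bigr\rangle\, d\mu_g$, so separating $C$ from a multiple of $g$ controls $q_h$ only when $\operatorname{tr}_g h\equiv 0$ pointwise. Second, and decisively, the passage from the separating tensor $h$ to its trace-free part $h_0$ fails: writing $h=h_0+\tfrac{\phi}{m}g$ with $\phi=\operatorname{tr}_g h$, one has $\int_M\langle du\otimes du,h\rangle\, d\mu_g=\int_M\langle du\otimes du,h_0\rangle\, d\mu_g+\tfrac1m\int_M \phi\,|du|_g^2\, d\mu_g$, and the correction term is weighted pointwise by $|du|_g^2$, hence is not constant over the unit sphere of $E$; strict separation by $h$ therefore does not transfer to $h_0$. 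Third, even if you restrict from the outset to pointwise trace-free $h$ (which are automatically volume-preserving to first order), duality only yields a combination with $\sum_i t_i\, du_i\otimes du_i=\varphi\, g$ for some \emph{function} $\varphi$, since the annihilator of the trace-free fields consists of tensors $\varphi g$; an extra argument is then needed to make $\varphi$ constant, namely computing for $\psi=\sum_i t_i u_i^2$ that $\Delta\psi=(2-m)\lambda_k\psi+\mathrm{const}$ and noting that $(2-m)\lambda_k<0$ cannot be an eigenvalue (for $m=2$, $\psi$ is harmonic, hence constant). The clean repair --- and the actual route of El Soufi--Ilias --- dualizes in the finite-dimensional space of quadratic forms on $E$: extremality says the linear family $\{q_h\}$ misses the open cone of definite forms, separation produces a nonzero positive semidefinite $P=\sum_i t_i\, u_i\otimes u_i$ with $\sum_i t_i\, q_h(u_i)=0$ for all admissible $h$, and since the annihilator of $\{h : \int_M\operatorname{tr}_g h\, d\mu_g=0\}$ is $\R g$, unwinding this gives $\sum_i t_i\, du_i\otimes du_i-\tfrac{\lambda_k}{2}\psi\, g=c\, g$ pointwise, after which the same $\Delta\psi$ computation makes the conformal factor constant; diagonalizing $P$ also renders your Carath\'eodory step unnecessary. (Minor: with your normalization the candidate point would be $\tfrac{\lambda_k}{m\,V(M,g)}\,g$, not $\tfrac1m g$.)
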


As we use the results of \cite{LiYau} we recall their terminology:
\begin{definition}
 Let $M$ be a compact $m$-dimensional manifold that admits a conformal map $\phi:M\to\Sp^n$. Denote by $G$ the group of conformal diffeomorphisms on $\Sp^n$. Then the $n$-\emph{conformal volume of }$\phi$ is defined as
 \begin{align*}
  V_c(n,\phi)\defi \sup_{g\in G} V(M, g\circ \phi).
 \end{align*}
 The $n$-\emph{conformal volume of} $M$ is defined as
 \begin{align*}
  V_c(n,M)\defi \inf_{\phi} V_c(n,\phi).
 \end{align*}
The \emph{conformal volume} of $M$ is
\begin{align*}
 V_c(M)\defi \inf_{n\geq 2} V_c(n,M).
\end{align*}
\end{definition}

We need the following result of Jakobson, Nadirashvili and Polterovich \cite[Theorem~1.3, Theorem~1.4]{Jakobson}:

\begin{theorem}[Jakobson, Nadirashvili, Polterovich  \cite{Jakobson}] \label{thmtau}
~\\
 A metric of revolution
 \begin{align*}
  g_0= \frac{9 + (1 + 8 \cos^2 v)^2}{1 + 8 \cos^2 v}\left(d u^2 + \frac{dv^2}{1 + 8 \cos^2 v}\right),
 \end{align*}
$0\leq u<\frac{\pi}{2}$, $0\leq v<\pi$, is an extremal metric for the first eigenvalue on a Klein bottle $K$. The surface $(K,g_0)$ admits a minimal isometric embedding into $\mathbb S^4$ by the first eigenfunctions. The first eigenvalue of the Laplacian for this metric has multiplicity $5$ and satisfies the equality
\begin{align*}
 \lambda_1 V(K,g_0) = 12 \pi \operatorname E \left(\frac{2\sqrt{2}}{3}\right),
\end{align*}
where $V(K,g_0)$ is the volume of the surface $(K,g_0)$. Furthermore, the surface $(K,g_0)$ is the bipolar surface of Lawson's $\tau_{3,1}$-torus.
\end{theorem}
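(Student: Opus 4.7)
The plan is to verify the several assertions of the theorem by combining Lawson's explicit parametrisation of the bipolar surface $\tilde\tau_{3,1}$ with a Sturm--Liouville analysis of the Laplace spectrum of the induced metric of revolution.

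First I would start from Lawson's parametrisation $\Psi_{3,1}(x,y)=(\cos 3x\cos y,\sin 3x\cos y,\cos x\sin y,\sin x\sin y)$ and its unit normal $\Psi_{3,1}^{\ast}$ in $\R^4$. Since $3\cdot 1\equiv 3\pmod 4$, Lapointe's classification gives that $\tilde\tau_{3,1}=\Psi\wedge\Psi^{\ast}$ descends to a Klein bottle, and the direct computation of Lawson's Chapter~11 exhibits the image in $\Lambda^2\R^4\cong\R^6$ as lying in a five-dimensional linear subspace, so $\tilde\tau_{3,1}\colon K\to\mathbb S^4$. I would then compute the first fundamental form of $\tilde\tau_{3,1}$ in $(x,y)$ and perform a linear change of variables $u=\alpha x+\beta y$, $v=\gamma x+\delta y$ chosen so as to kill the cross term and display the $u$-independent (rotational) structure; matching coefficients identifies the resulting expression with $g_0$ on the fundamental domain $[0,\pi/2)\times[0,\pi)$.

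Next I would analyse the Laplace spectrum of $g_0$ by separation of variables. The metric being $u$-independent, eigenfunctions split as $f_n(v)\cos(nu)$ and $f_n(v)\sin(nu)$ for $n\in\Z_{\geq 0}$, with $f_n$ a solution of a Sturm--Liouville problem on $[0,\pi)$ whose boundary and periodicity conditions are dictated by the Klein-bottle identification $(u,v)\sim(u+\pi/2,\pi-v)$. The five ambient coordinates of $\tilde\tau_{3,1}\colon K\to\mathbb S^4\subset\R^5$ are eigenfunctions with the common eigenvalue $\lambda=\dim M=2$, since the restriction of an ambient linear coordinate to a minimal submanifold of the unit sphere is an eigenfunction with that eigenvalue. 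Fullness of the embedding produces a five-dimensional eigenspace, so $\operatorname{mult}(\lambda)\geq 5$; the essential content is to show that $\lambda=\lambda_1$ and that this multiplicity is exactly five. This reduces, for each Fourier index $n$, to bounding the lowest ODE eigenvalue $\mu_1^{(n)}$ from below by $\lambda$ and to ruling out coincidences $\mu_k^{(n)}=\lambda$ with $k\geq 2$ or with Fourier modes other than those accounting for the five coordinate functions. For the numerical identity, I set $w(v)=1+8\cos^2 v=9-8\sin^2 v$, so that $\sqrt{\det g_0}=(9+w^2)\,w^{-3/2}=9w^{-3/2}+w^{1/2}$. Using the classical identities $\int_0^{\pi/2}(1-k^2\sin^2 v)^{1/2}\,dv=\operatorname E(k)$ and $\int_0^{\pi/2}(1-k^2\sin^2 v)^{-3/2}\,dv=\operatorname E(k)/(1-k^2)$ with $k=2\sqrt 2/3$ and $1-k^2=1/9$, one obtains $\int_0^{\pi}\sqrt{\det g_0}\,dv=12\operatorname E(2\sqrt 2/3)$, whence $V(K,g_0)=(\pi/2)\cdot 12\operatorname E(2\sqrt 2/3)=6\pi\operatorname E(2\sqrt 2/3)$ and $\lambda_1 V(K,g_0)=12\pi\operatorname E(2\sqrt 2/3)$. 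Finally, extremality of $g_0$ for $\lambda_1$ is immediate from the Nadirashvili / El Soufi--Ilias theorem recalled just above, once we know the immersion is by first eigenfunctions.

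The main obstacle is the spectral step: proving that the eigenvalue $\lambda=2$ realised by the five ambient coordinates is genuinely the first nontrivial eigenvalue of $(K,g_0)$ and that no further Sturm--Liouville mode contributes eigenfunctions at that level. Jakobson, Nadirashvili, and Polterovich handle this by explicit ODE analysis for the $n=0$ and low-$n$ modes combined with oscillation and monotonicity estimates for large $n$; this is the only step where quantitative work with the elliptic-type $v$-ODE cannot be avoided.
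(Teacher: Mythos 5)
You should note at the outset that the paper itself offers no proof of this statement: Theorem~\ref{thmtau} is imported verbatim from Jakobson--Nadirashvili--Polterovich \cite{Jakobson} and used as a black box (in Theorem~\ref{thm.lower bound in the conformal class} and Proposition~\ref{prop.rigidity}), so there is no in-paper argument to compare against. Measured against the original proof your sketch reconstructs, the outline is faithful: the identification of $\tilde\tau_{3,1}$ as a Klein bottle via Lapointe ($mk\equiv 3 \bmod 4$), the reduction to $\Sp^4$ because the bipolar image lies in a hyperplane of $\Lambda^2\R^4\cong\R^6$, the use of Takahashi's theorem to see that the five ambient coordinates are eigenfunctions with eigenvalue $\dim M=2$, and the extremality via the Nadirashvili / El Soufi--Ilias criterion are all correct ingredients. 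Your volume computation also checks out: with $w=1+8\cos^2 v=9\left(1-\tfrac{8}{9}\sin^2 v\right)$ one has $\sqrt{\det g_0}=(9+w^2)w^{-3/2}$, and the two elliptic-integral identities give $\int_0^{\pi/2}w^{1/2}\,dv=3\operatorname E(k)$ and $\int_0^{\pi/2}9w^{-3/2}\,dv=3\operatorname E(k)$ for $k=2\sqrt2/3$, hence $V(K,g_0)=6\pi\operatorname E(2\sqrt2/3)$ and $\lambda_1 V=12\pi\operatorname E(2\sqrt2/3)$, consistent with Theorem~\ref{thm.lower bound in the conformal class}.

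The genuine gap is the step you yourself flag and then defer: showing that the eigenvalue $2$ carried by the coordinate functions is actually $\lambda_1(g_0)$, and that its multiplicity is exactly $5$. This is not a technical afterthought; it is the mathematical content of the theorem. Takahashi only gives that the coordinates are \emph{some} eigenfunctions with eigenvalue $2$; without the Sturm--Liouville analysis ruling out eigenvalues in $(0,2)$ for every Fourier mode $n$ (including the delicate $n=0$ and low-$n$ cases and oscillation bounds for large $n$), you cannot conclude that the embedding is by \emph{first} eigenfunctions, and consequently you also cannot invoke El Soufi--Ilias to get extremality for $\lambda_1$ -- your last step is conditional on the one you skipped. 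Writing ``Jakobson, Nadirashvili, and Polterovich handle this by explicit ODE analysis'' is a citation, not a proof; as a standalone argument the proposal establishes the volume identity and the soft structural facts, but not the headline spectral claims. One further small remark: the logical order in \cite{Jakobson} is essentially the reverse of yours -- they first construct and analyze the extremal metric $g_0$ spectrally and then identify $(K,g_0)$ with the bipolar Lawson surface -- though this is a matter of presentation rather than a flaw in your plan.
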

Now we prove the first part of Theorem~\ref{main1}.

\begin{theorem}\label{thm.lower bound in the conformal class}
 Let $f:K\to\R^n$ be an immersed Klein bottle such that $g=f^\sharp \delta^{\R^n}$ is conformal to $\left(\varphi\circ \tilde\tau_{3,1}\right)^\sharp \delta_{ \R^n}$, where $\tilde \tau_{3,1}$ is the bipolar surface of Lawson's $\tau_{3,1}$-surface and $\varphi:\mathbb S^4\to\R^4$ a stereographic projection. Then 
 \begin{align*}
  \W(f)\geq 6 \pi \operatorname E \left(\frac{2\sqrt{2}}{3}\right)\approx 6.682 \pi.
 \end{align*}
Equality holds for $\varphi\circ \tilde\tau_{3,1}$. 
\end{theorem}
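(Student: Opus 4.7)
The plan is to combine two Li-Yau inequalities involving the conformal volume and then insert the value of $\lambda_1 V$ at the extremal metric $g_0$ provided by Theorem~\ref{thmtau}. Denote by $\iota:\R^n\to\Sp^n\subset\R^{n+1}$ an inverse stereographic projection. Decomposing the mean curvature of $\iota\circ h$ into parts tangent and normal to $\Sp^n$ yields the identity
\[
\W_{\R^n}(h)\;=\;\W_{\Sp^n}(\iota\circ h)\;+\;V\!\bigl(K,(\iota\circ h)^\sharp g_{\Sp^n}\bigr).
\]
Applying any Möbius transformation $T$ of $\Sp^n$ (extending to a conformal diffeomorphism of $\R^{n+1}$ under which $\W_{\R^{n+1}}$ is invariant), dropping the nonnegative $\W_{\Sp^n}$ term, and taking the supremum over $T$ produces Li-Yau's lower bound $\W(h)\ge V_c(n,\iota\circ h)$.

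For the second ingredient, let $\Phi:K\to\Sp^n$ be any conformal immersion and $g$ any Riemannian metric on $K$ in the conformal class of $\Phi^\sharp g_{\Sp^n}$. By Hersch's centering trick, choose a Möbius $T$ so that the $\R^{n+1}$-coordinate functions of $T\circ\Phi$ are $L^2(K,g)$-orthogonal to the constants; they are then admissible test functions for $\lambda_1(g)$. Summing their Rayleigh quotients and exploiting the dimension-two conformal invariance of the Dirichlet energy delivers
\[
\lambda_1(g)\,V(K,g)\;\le\;2\,V\!\bigl(K,(T\circ\Phi)^\sharp g_{\Sp^n}\bigr)\;\le\;2\,V_c(n,\Phi).
\]
Since $f$ is conformal to $\varphi\circ\tilde\tau_{3,1}$ and both $\iota,\varphi$ are conformal, $g_0$ lies in the conformal class of $(\iota\circ f)^\sharp g_{\Sp^n}$, so this applies to $\Phi=\iota\circ f$ and $g=g_0$. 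Combining with the first step and inserting $\lambda_1(g_0)V(K,g_0)=12\pi\operatorname E(2\sqrt 2/3)$ from Theorem~\ref{thmtau} yields
\[
\W(f)\;\ge\;V_c(n,\iota\circ f)\;\ge\;\tfrac{1}{2}\lambda_1(g_0)V(K,g_0)\;=\;6\pi\operatorname E\!\bigl(\tfrac{2\sqrt 2}{3}\bigr).
\]

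For equality at $f=\varphi\circ\tilde\tau_{3,1}$, choose coordinates so that $\iota\circ\varphi:\Sp^4\to\Sp^n$ is the isometric inclusion as a totally geodesic subsphere; then $\iota\circ f$ coincides with $\tilde\tau_{3,1}$ viewed in $\Sp^n$, which is minimal, so $\W_{\Sp^n}(\iota\circ f)=0$ and the first display reads $\W(\varphi\circ\tilde\tau_{3,1})=V(K,g_0)=6\pi\operatorname E(2\sqrt 2/3)$ (the latter equality via $\lambda_1(g_0)=2$ from minimality in $\Sp^4$ together with Theorem~\ref{thmtau}). The main delicacy is the extension of the Hersch test-function argument to an arbitrary metric in the conformal class, not only the metric induced by $\Phi$; this crucially exploits dimension two via the conformal invariance of the Dirichlet energy on surfaces.
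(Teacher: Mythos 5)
Your proof is correct and follows essentially the same route as the paper: both rest on Li--Yau's conformal volume machinery together with the value $\lambda_1(g_0)V(K,g_0)=12\pi\operatorname E\bigl(\tfrac{2\sqrt 2}{3}\bigr)$ from Theorem~\ref{thmtau}. The only difference is presentational -- where the paper cites \cite[Lemma~1, Corollary~4]{LiYau} to get $\W(f)\geq V_c(n,K)=V(K,g_0)$, you inline the proofs of those two ingredients (the stereographic mean-curvature decomposition for $\W\geq V_c$, and Hersch centering plus two-dimensional conformal invariance of the Dirichlet energy for $\lambda_1(g_0)V(K,g_0)\leq 2V_c$), which amounts to the same chain of inequalities.
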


\begin{proof}
As the surface $\tilde\tau_{3,1}$ admits a minimal isometric embedding into $\mathbb S^4$ due to Jakobson, Nadirashvili and Polterovich (Theorem~\ref{thmtau}) we get by \cite[Corollary~4]{LiYau} that 
$$V_c(K)=V_c(4,K)=V(K,g_0),$$
where we used the expressions defined by Li and Yau: \\
$V_c(4,K)$ is the $4$-conformal volume of $K$, \\
$V_c(K)$ is the conformal volume of $K$ and \\
$V(K,g_0)$ is the volume of the surface. \\
As $\tilde\tau_{3,1}$ is minimally embedded into $\mathbb S^4$ by the first eigenfunctions we have that $\lambda_1=2$ \cite[Proposition~1.1]{ElSoufi3}. We use \cite[Lemma~1]{LiYau}, i.e.
\begin{align*}
 \W(f)\geq V_c(n,K)
\end{align*}
and the fact that the conformal volume of two immersions coincides when they are conformal to each other and get
\begin{align*}
  2 \W(f)\geq 2 V_c(n,K) = 2 V_c(K) = 2 V(K,g_0) = 12 \pi \operatorname E \left(\frac{2\sqrt{2}}{3}\right).
\end{align*}

\end{proof}
In the following we want to show uniqueness of Lawson's $\tilde\tau_{3,1}$ Klein bottle as a Willmore minimizer in its conformal class (which is the second part of Theorem~\ref{main1}). To do so we need a generalization of a result of El Soufi and Ilias \cite[Proposition 3.1. (i)]{ELSoufi}.

\begin{proposition}\label{prop:rigidity1}
Let $f: M \to \mathbb S^n$ be an $m$-dimensional minimal immersion,  $g\defi f^\sharp\delta_{\mathbb S^n}$ and $M \neq \mathbb S^m$. Furthermore, let $f$ satisfy 
\begin{equation}\label{eq.rigidity1} 
m V(M,g)^{\frac 2m} 
%\sup_{\substack{ \tilde{f} \text{ conformal to } f\\ \tilde{g}
%= \tilde{f}^\sharp \delta_{\mathbb S^n}}} \lambda_1(\tilde{g}) V(M, \tilde{g})^{\frac 2m}
=\lambda_1(g_0) V(M, g_0)^{\frac 2m} \end{equation}
for some smooth Riemannian metric $g_0$. Then we have $g=k g_0$ for some $k>0$. Moreover $f$ is given by a subspace of the first eigenspace. 
\end{proposition}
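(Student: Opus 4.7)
Plan: I would generalise the standard Li-Yau / El Soufi-Ilias conformal-volume argument. Three ingredients drive the proof. First, since $f:M\to\mathbb{S}^n$ is minimal, the ambient coordinate functions satisfy $\Delta_g f_i=mf_i$, so $m\in\sigma(M,g)$ and in particular $\lambda_1(g)\leq m$, with equality iff $f$ is by first eigenfunctions. Second, for minimal immersions into the sphere Li-Yau showed $V_c(n,f)=V(M,g)$, i.e.\ the supremum defining the conformal volume is attained at the identity element of the M\"obius group $G$. Third, for any metric $g_0$ in the conformal class of $g$, Hersch's centre-of-mass trick produces a $\gamma\in G$ for which the centred components $(\gamma\circ f)_i$ are admissible test functions for $\lambda_1(g_0)$, yielding
\[
\lambda_1(g_0)V(M,g_0)^{2/m}\leq m\,V(M,\gamma\circ f)^{2/m}\leq m\,V_c(n,f)^{2/m}.
\]

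Combining the second and third ingredient gives $\lambda_1(g_0)V(M,g_0)^{2/m}\leq m V(M,g)^{2/m}$, and the hypothesis of the proposition is exactly the equality case. I would then unwind the equality in two steps. Saturation of the Rayleigh estimate forces each centred $(\gamma\circ f)_i$ to lie in the $\lambda_1(g_0)$-eigenspace of $\Delta_{g_0}$, while $V(M,\gamma\circ f)=V_c(n,f)$ combined with the minimality of $f$ pins $\gamma$ down to an isometry of $\mathbb{S}^n$. Thus, up to a rotation of $\mathbb{R}^{n+1}$, the components of $f$ itself span a subspace of the first eigenspace of $\Delta_{g_0}$, which is the ``moreover'' clause. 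Writing $g=e^{2u}g_0$ and comparing the identity $\Delta_g f_i=mf_i$ with the newly established $\Delta_{g_0} f_i=\lambda_1(g_0)f_i$ via the conformal-Laplacian transformation forces the factor $e^{2u}$ to be constant, so that $g=kg_0$ with $k=\lambda_1(g_0)/m$.

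The main obstacle is the rigidity step, namely carefully tracking the equality cases in Hersch's lemma and ruling out non-isometric M\"obius elements that could realise the supremum $V_c(n,f)$. The exclusion $M\neq\mathbb{S}^m$ enters precisely here: on the round sphere itself the non-compact part of $G$ can redistribute $f$ without changing its volume, destroying the rigidity. A secondary technical point that should be made explicit is that the third ingredient above requires $g_0\in[g]$; this has to either be part of the standing hypothesis or be deduced from the numerical equality in the statement before one is allowed to invoke the Li-Yau bound.
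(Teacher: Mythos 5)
Your proposal is correct and follows essentially the same route as the paper's proof: Hersch's centering trick, the Rayleigh quotient estimate (via H\"older and conformal invariance of the $m$-energy when $m>2$), the Li--Yau/El Soufi--Ilias identity $V(M,g)=\sup_{\gamma\in G}V(M,(\gamma\circ f)^\sharp\delta_{\mathbb S^n})\ge V_c(n,M)$ for minimal immersions together with its equality rigidity forcing $\gamma\in O(n+1)$ (which is exactly where $M\neq\mathbb S^m$ enters), Takahashi's theorem, and finally the conformal transformation law of the Laplacian applied to the eigenfunction identities to force the conformal factor to be constant. Your closing caveat is also well taken: the paper's proof indeed tacitly assumes $g_0$ conformal to $g$ (``Since $g$ and $g_0$ are conformal\dots''), a hypothesis not stated in the proposition itself but satisfied in its later application.
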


\begin{proof}
Scaling $g_0$ we may assume without loss of generality that $\lambda_1(g_0)=m$. Since $g$ and $g_0$ are conformal i.e. $g_0=e^{2u} g$ for some smooth $u: M \to \R$ the first part of the proposition is proved showing that $u\equiv 0$.\\
Let $G$ denote the group of conformal diffeomorphisms of $\mathbb S^n$. Since $f$ is assumed to be minimally immersed by \cite[\textsection 3 Proposition 1]{LiYau} for $m=2$ and \cite[Theorem 1.1]{ELSoufi} for $m \ge 3$ we have
\begin{equation}\label{eq:ineq1} V(M, g) = \sup_{\gamma \in G} V(M, (\gamma \circ f)^\sharp \delta_{\Spn}) \ge V_c(n,M).\end{equation}
% Moreover, \cite[\textsection 2 Theorem 1]{LiYau} for $m=2$ and \cite[Proposition 3.1]{ELSoufi} implies that 
% \[ m V(M,g)^{\frac 2m} \ge \sup_{\substack{ \tilde{f} \text{ conformal to } f\\ \tilde{g}= \tilde{f}^\sharp \delta_{\mathbb S^n}}} \lambda_1(\tilde{g}) V(M, \tilde{g})^{\frac 2m}.\]
As shown in the proof of \cite[\textsection 2 Theorem 1]{LiYau} there exists $\gamma \in G$ such that 
\[ \int_{M} X\circ \gamma \circ f \, d\mu_{g_0} =0\]
where $X=(X^1, \dotsc , X^{n+1})$ are the coordinate functions of $\R^{n+1}$.\\
Set $\hat{f}:=\gamma\circ f$ and $\hat{f}^i=X^i\circ \gamma \circ f$. By the variational characteristic of $\lambda_1$ and H\"olders inequality in the case of $m >2$ we have
\[ \lambda_1(g_0)V(M,g_0) \le \int_M \abs{D\hat{f}}_{g_0}^2 \, d\mu_{g_0} \le \left( \int_{M} \abs{D\hat{f}}_{g_0}^m \, d\mu_{g_0}\right)^{\frac 2m} V(m,g_0)^{1-\frac 2m}. \]
The first factor is conformally invariant. Thus, by \eqref{eq:ineq1} we deduce
\begin{equation}\label{eq:ineq2} \int_{M} \abs{D\hat{f}}_{g_0}^m \, d\mu_{g_0} = \int_{M} \abs{D\hat{f}}_{g}^m \, d\mu_{g} = m^{\frac m2} V(M, (\gamma \circ f)^\sharp \delta_{\Spn}) \le m^{\frac m2} V(M, g).\end{equation}
Combining all inequalities we get
\[ \lambda_1(g_0)V(M,g_0)^{\frac 2m} \le \int_{M} \abs{D\hat{f}}_{g_0}^2 \, d\mu_{g_0} V(M,g_0)^{-1+\frac 2m} \le m V(M, g)^{\frac{2}{m}}.\]
By assumption \eqref{eq.rigidity1} we must have equality at all steps. Equality in the first implies that $\hat{f}^i$ are eigenfuctions of $\Delta_{g_0}$ to $\lambda_1(g_0)=m$. Equality in the second implies that $V(M, (\gamma \circ f)^\sharp \delta_{\Spn}) =  V(M, g)$ and since $f$ is a minimal immersion we deduce by \cite[Theorem 1.1]{ELSoufi} that $\gamma \in O(n+1)$ and $\hat{f}$ is a minimal immersion and $g= \hat{f}^\sharp \delta_{\Spn}$ as well. By \cite[Theorem 3]{Taka} this implies that $\hat{f}^i$ are eigenfunctions of $\Delta_{g}$ to some eigenvalue $\lambda_k(g)=m$. Since $g_0=e^{2u}g$ one has for any $\varphi: M \to \R$ in local coordinates that $ \Delta_{g_0} \varphi = e^{-2u} \left( \Delta_{g} \varphi + (m-2) g^{kl} \partial_k u \partial_l \varphi\right)$. Apply this to any $\hat{f}^i$ we deduce that 
\[ m \hat{f}^i (e^{2u}-1) = (m-2) g^{kl} \partial_k u \partial_l \hat{f}^i.\]
If $m=2$ we deduce directly $u\equiv 0$ choosing a nontrivial $\hat{f}^i$. If $m >2$ we argue as follows: multiply the above by $\hat{f}^i$ sum in $i$ and use $\sum_{i=1}^{n+1} (\hat{f}^i)^2 =1 $ hence $ m (e^{2u}-1) = 0$.
The remaining conclusions follow easily.
\end{proof}

\begin{proposition}[Uniqueness of $\tilde\tau_{3,1}$ in its conformal class]\label{prop.rigidity}
~\\
If $f: K \to \R^n$ is a Willmore minimizer in the conformal class of Lawson's bipolar $\tilde\tau_{3,1}$-surface i.e. $f$ conformal to $\varphi\circ \tilde{\tau}_{3,1}$ then $f$ coincides with $\varphi\circ\tilde{\tau}_{3,1}$ up to a conformal diffeomorphism of $\R^n$. 
\end{proposition}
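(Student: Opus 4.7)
The plan is to retrace the chain of inequalities in Theorem~\ref{thm.lower bound in the conformal class} under the equality hypothesis $\W(f)=6\pi\operatorname{E}(2\sqrt 2/3)$ and then identify the resulting minimal representative by combining Proposition~\ref{prop:rigidity1} with Theorem~\ref{main2}. Write $g_0$ for the metric on $\tilde\tau_{3,1}$ and let $i\colon \R^n\to\Spn$ be an inverse stereographic projection, so that $\varphi=i^{-1}$ on $\Sp^4$. Combining the hypothesis with
\[ \W(f)\ge V_c(n, i\circ f)\ge V_c(n,K)=V_c(K)=V(K,g_0)=\W(f)\]
forces equality throughout. Equality in the Li--Yau bound $\W(f)\ge V_c(n, i\circ f)$ from \cite[Lemma 1]{LiYau}, together with the Li--Yau center--of--mass argument in \cite[\S 2]{LiYau}, produces a Möbius transformation $\gamma\in G$ for which $\tilde f\defi \gamma\circ i\circ f\colon K\to\Spn$ is a minimal immersion of $K$ in $\Spn$. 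Its induced metric $g\defi \tilde f^\sharp\delta_{\Spn}$ is conformal to $g_0$ (since $f$ and hence $i\circ f$ lies in the conformal class of $\varphi\circ\tilde\tau_{3,1}$, and $\gamma$ preserves that conformal class), and it has volume $V(K,g)=V_c(n,K)=V(K,g_0)$.

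Next I would apply Proposition~\ref{prop:rigidity1} to $\tilde f$ with reference metric $g_0$. Since $\tilde\tau_{3,1}$ is minimally embedded in $\Sp^4$ by first eigenfunctions, one has $\lambda_1(g_0)=2$ \cite[Proposition~1.1]{ElSoufi3}. With $m=2$ the hypothesis \eqref{eq.rigidity1} then reduces to $V(K,g)=V(K,g_0)$, which we just verified. The proposition yields $g=kg_0$ for some constant $k>0$ and, crucially, that the coordinate functions of $\tilde f$ (after possibly composing with an auxiliary orthogonal transformation, as in its proof) lie in the first eigenspace of $\Delta_{g_0}$. Thus $\tilde f$ is a minimal immersion of $(K,g_0)$ into $\Spn$ parametrized by first eigenfunctions of the Laplacian.

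By Theorem~\ref{main2} the only such Klein bottle is Lawson's bipolar surface, so $\tilde f=O\circ\tilde\tau_{3,1}$ for some $O\in O(n+1)$. Unwinding everything, $f=i^{-1}\circ\gamma^{-1}\circ O\circ\tilde\tau_{3,1}=\Phi\circ(\varphi\circ\tilde\tau_{3,1})$ with $\Phi\defi i^{-1}\circ\gamma^{-1}\circ O\circ i$ a conformal diffeomorphism of $\R^n\cup\{\infty\}$, which is exactly the claim. The main obstacle in this plan is the first step: one has to promote equality in the Li--Yau bound to the existence of an honest minimal immersion $\tilde f$ in $\Spn$ rather than merely a maximizing sequence of Möbius rearrangements, and to verify that the induced volume matches $V(K,g_0)$. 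Both points are exactly what the Li--Yau center--of--mass construction delivers. Once this is in hand, the rest is a clean two--step identification: Proposition~\ref{prop:rigidity1} forces $\tilde f$ to be a first--eigenfunction minimal immersion of $(K,g_0)$, and Theorem~\ref{main2} then forces it to be $\tilde\tau_{3,1}$ itself up to an isometry of the ambient sphere.
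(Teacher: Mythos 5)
Your first two steps coincide with the paper's own proof. The equality chain $\W(f)\ge V_c(n,K)=V_c(K)=V(K,g_0)$ together with \cite[\textsection 5 Lemma 1]{LiYau} is exactly how the paper produces the M\"obius transformation $\gamma$ making $\hat f\defi\gamma\circ\varphi^{-1}\circ f$ minimal in $\Spn$, and your application of Proposition~\ref{prop:rigidity1} (with $m=2$, $\lambda_1(g_0)=2$ from \cite[Proposition~1.1]{ElSoufi3}, and the volume matching $V(K,g)=V(K,g_0)$, which you correctly identified as a point needing verification) is the same as the paper's, yielding $g=g_0$ and that the coordinates of $\hat f$ lie in the first eigenspace of $\Delta_{g_0}$.

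The problem sits in your final step, where you close by citing Theorem~\ref{main2}. Inside this paper that theorem is not an independent black box: its proof, given at the end of Section~\ref{Section2}, consists of Theorem~\ref{uniqueMetric} (the actual content of \cite{ElSoufi2}, which is uniqueness of the \emph{extremal metric}, not of the immersion) to force $g=g_0$, followed by the sentence that ``the proof of Proposition~\ref{prop.rigidity} shows that the coordinate functions of $f$ coincide with those of $\tilde\tau_{3,1}$ up to an element of $O(5)$'' --- i.e.\ precisely the identification step you are omitting. So as written your argument is circular within the paper's logical structure. Note also that once Proposition~\ref{prop:rigidity1} has given you $g=g_0$, the metric-uniqueness content of Theorem~\ref{main2} is redundant; what is genuinely missing is the finite-dimensional identification: knowing that the coordinates of $\hat f$ lie in the $5$-dimensional first eigenspace does not by itself pin down $\hat f$ as $\tilde\tau_{3,1}$ up to an ambient isometry, since a priori the Gram matrix of the coordinate functions could be degenerate or differ from that of $\tilde\tau_{3,1}$. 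The paper supplies this as follows: fix $O\in O(n+1)$ diagonalising and ordering $a^{ij}=\int_K \hat f^i\hat f^j\,d\mu_g$; since $\W(f)<8\pi$, the Li--Yau inequality forces $f$ and $\hat f$ to be embeddings; a Klein bottle cannot be embedded into $\Sp^3$, so at least five coordinates are nontrivial; and the multiplicity of $\lambda_1(g_0)$ is five (Theorem~\ref{thmtau}), so at most five are. Hence exactly five coordinates survive and $\hat f$ coincides with $\tilde\tau_{3,1}$ up to an element of $O(5)$. Add this embeddedness-and-counting argument (or an independent external reference proving Theorem~\ref{main2} in full) and your proof is complete.
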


\begin{proof}
Let $f:K\to\R^n$ be a minimizer of the Willmore energy in the conformal class of $\varphi\circ \tilde\tau_{3,1}$. 
Theorem~\ref{thm.lower bound in the conformal class} implies that \[\W(f)=\W(\varphi\circ \tilde{\tau}_{3,1})=V_c(K, \varphi\circ \tilde{\tau}_{3,1})=V_c(K,f).\]
We apply \cite[\textsection 5 Lemma 1]{LiYau} to deduce that there exists $\gamma \in G$ such that $\hat{f}:=\gamma \circ \varphi^{-1} \circ f$ is a minimal surface in $\Spn$. Lawsons $\tilde\tau_{3,1}$-surface is extremal for the first eigenvalue, compare Theorem~\ref{thmtau}. Hence we are in the situation of Proposition \ref{prop:rigidity1} and deduce that $g:= \hat{f}^\sharp \delta_{\Spn}$ coincides with $ g_0:= \tilde{\tau}_{3,1}^\sharp \delta_{\Spn}$.
Fix $O \in O(n+1)$ diagonalising and ordering the symmetric positiv definite matrix $a^{ij}:= \int_{M} \hat{f}^i\hat{f}^j \, d\mu_{g}$. Considering $O\hat{f}$ instead of $\hat{f}$ itself we may assume that that the different $\hat{f}^i$ are pairwise orthogonal and satisfy $\int_M (\hat{f}^i)^2 \, d\mu_{g} \ge \int_M (\hat{f}^{i+1})^2 \, d\mu_{g}$ for all $i =1, \dotsc, n$. \\
The surface $\tilde{\tau}_{3,1}$ admits a minimal embedding into $\mathbb{S}^4$ by five eigenfunctions to $\lambda_1(g_0)$, compare Theorem~\ref{thmtau}. Hence $\hat{f}$ coincides with $\tilde{\tau}_{3,1}$ up to a element in $O(5)$ if we show that $\norm{\hat{f}^i}_{L^2} \neq 0 $ for $i \le 5 $ and $\norm{\hat{f}^i}_{L^2}=0$ for $i>5$. Since $\W(f)< 8\pi$, compare Theorem~\ref{thm.lower bound in the conformal class}, we have that $f$ and $\hat{f}$ are embeddings. A Klein bottle cannot be embedded into $\mathbb{S}^3$ hence we must have $\norm{\hat{f}^i}_{L^2}>0$ for at least $5$ indices. The multiplicity of $\lambda_1(g_0)$ is $5$, compare Theorem~\ref{thmtau} hence there can be at most $5$ nontrivial $\hat{f}^i$. 
\end{proof}

There is another indication for Conjecture~\ref{conj1} to be true. For that, we recall the following:

\begin{theorem}[El Soufi, Giacomini, Jazar, Theorem~1.1 in \cite{ElSoufi2}]\label{uniqueMetric}
~\\
 The Riemannian metric $g_0$ from \cite{Jakobson} (see Theorem~\ref{thmtau}) is the unique extremal metric of the functional $\lambda_1$ under volume preserving deformations of metrics on a Klein bottle $K$.
\end{theorem}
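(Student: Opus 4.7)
The plan is to reduce the uniqueness statement to a classification of minimal Klein bottles in spheres whose coordinate functions are first eigenfunctions, and then to single out $\tilde\tau_{3,1}$ within that class. First I would invoke the second half of the Nadirashvili--El Soufi--Ilias theorem quoted in Section~\ref{Section2}: any extremal metric $g$ for $\lambda_1$ on $K$ admits a finite family of $\lambda_1$-eigenfunctions $u_1,\ldots,u_d$ with $\sum_i du_i\otimes du_i = g$, so that $u=(u_1,\ldots,u_d)\colon(K,g)\to\Sp^{d-1}(\sqrt{2/\lambda_1(g)})$ is a minimal isometric immersion by first eigenfunctions. Thus uniqueness of $g_0$ up to volume-preserving equivalence is equivalent to uniqueness of such a minimal immersion of the Klein bottle, which is the content of Theorem~\ref{main2}.

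To prove Theorem~\ref{main2}, I would lift $u$ through the oriented double cover $\pi\colon T^2 \to K$. The composition $u\circ\pi$ is a minimal immersion of a flat conformal torus into a round sphere whose coordinates are invariant under the fixed-point-free, orientation-reversing involution $\sigma$ realising $K=T^2/\sigma$. The conformal classes of tori admitting such a $\sigma$ form a one-parameter family. Minimality, together with $\sigma$-equivariance of the coordinate functions, forces $u\circ\pi$ to lie in the Lawson-type family; combined with Lapointe's characterisation that $\tilde\tau_{m,k}$ descends to a Klein bottle exactly when $(m,k)=1$ and $mk\equiv 3\pmod 4$, this restricts $u$ to be a bipolar Lawson Klein bottle $\tilde\tau_{m,k}$ up to an ambient isometry.

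The main obstacle is then the spectral step: among the admissible pairs $(m,k)$ one has to show that the embedding coordinates are first eigenfunctions only for $(m,k)=(3,1)$. Using Lawson's explicit parametrization $\Psi_{m,k}$, separation of variables yields the spectrum of the induced Laplacian on each $\tilde\tau_{m,k}$ together with the eigenspace to which the five embedding coordinates belong. A careful monotonicity analysis in $(m,k)$, anchored by the value $\lambda_1 V(K,g_0)=12\pi \operatorname E(2\sqrt 2/3)$ from Theorem~\ref{thmtau} and by the fact that for $(m,k)\neq (3,1)$ the natural coordinates of $\tilde\tau_{m,k}$ sit in a strictly higher eigenspace, eliminates every other candidate.

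Once Theorem~\ref{main2} is established, the conclusion is immediate: an extremal metric $g$ on $K$ coincides, up to the action of $O(d)$ on the eigenfunctions, with the induced metric of $\tilde\tau_{3,1}$; after normalising the volume we obtain $g=g_0$, proving the desired uniqueness. I expect the Lawson-family spectral check to be the genuinely delicate step, while all other steps are either standard consequences of the extremal-metric/minimal-immersion correspondence or follow from the explicit information assembled in the preliminary section.
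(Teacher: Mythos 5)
You have identified the right reduction, but the proof of the key step fails. A structural remark first: the paper does not prove this statement at all --- it is quoted from El Soufi--Giacomini--Jazar \cite{ElSoufi2}, and the paper's logic runs in the opposite direction to yours: Theorem~\ref{main2} is deduced \emph{from} Theorem~\ref{uniqueMetric}, whereas you propose to deduce Theorem~\ref{uniqueMetric} from Theorem~\ref{main2}. Your reduction itself is sound: for $k=1$ the condition $\lambda_{k-1}<\lambda_k$ in the quoted Nadirashvili/El Soufi--Ilias theorem holds automatically, so every extremal metric for $\lambda_1$ on $K$ is induced by a minimal immersion into a sphere by first eigenfunctions, and uniqueness of such immersions would indeed give $g=g_0$ after normalizing the volume. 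The problem is that you must then prove Theorem~\ref{main2} from scratch, and your sketch for it has a fatal gap.

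The gap is the assertion that minimality together with $\sigma$-equivariance of the lifted coordinates ``forces $u\circ\pi$ to lie in the Lawson-type family.'' This is unjustified and false as stated: Lawson's own $\tau_{2,1}$ (indeed any $\tau_{m,k}$ with $m$ or $k$ even) is a minimal Klein bottle immersed in $\Sp^3$ that is \emph{not} a bipolar surface, so minimal Klein bottles in spheres are not exhausted by the family $\tilde\tau_{m,k}$, and no classification of minimal Klein bottles (or of minimal tori) in $\Sp^n$ exists that you could invoke; equivariance under the deck involution only records that the surface is a Klein bottle, which you already knew. Note also that up to this point your argument never uses the first-eigenfunction hypothesis, yet that hypothesis is precisely what must carry the proof --- without it the uniqueness statement is false. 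The actual argument of \cite{ElSoufi2} works at the level of the metric rather than through a classification of minimal surfaces: the multiplicity bound for $\lambda_1$ on the Klein bottle (five, attained by $g_0$ per Theorem~\ref{thmtau}) confines the immersion to $\Sp^4$; one then proves that any extremal metric on $K$ must be rotationally invariant, reduces the minimal-immersion equations to a boundary value problem for a system of ODEs, and establishes uniqueness of its solutions, from which $\tilde\tau_{3,1}$ emerges. Your final spectral step --- that among the bipolar Klein bottles $\tilde\tau_{m,k}$ only $(m,k)=(3,1)$ is immersed by first eigenfunctions --- is correct but is only the easy residue of the problem; the classification it rests on is the missing, and in the form you propose unobtainable, ingredient.
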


% \begin{corollary}
% The only minimal Klein bottle immersed into $\mathbb S^4$ by the first eigenfunctions of the Laplacian is Lawson's bipolar surface $\tilde\tau_{3,1}$.
% \end{corollary}

% \begin{remark}
%  The analog result for tori was proved by Montiel and Ros \cite{Montiel}: The only minimal torus immersed into $\mathbb S^3$ by the first eigenfunctions is the Clifford torus.
% \end{remark}

As a corollary we get another proof of Theorem~\ref{main2}:

\begin{proof}
(of Theorem~\ref{main2})
 Let $f:K\to \mathbb S^n$ be another minimal immersion where the coordinate functions are first eigenfunction of the Laplacian. Then $g\defi f^\sharp \delta_{\mathbb S^n}$ is extremal for $\lambda_1=2$, see \cite[Proposition~1.1]{ElSoufi3}.
 By Theorem~\ref{uniqueMetric} we have $g=g_0$. Thus, we have
 \begin{align*}
  2 V(K,g)= 2V(K,g_0)= \lambda_1(g_0) V(K,g_0)=12\pi E\left(\frac{2 \sqrt{2}}{3}\right).
 \end{align*}
 The proof of Proposition~\ref{prop.rigidity} shows that the coordinate functions of $ f$ coincide with those of  $\tilde\tau_{3,1}$ up to an element of $O(5)$.
\end{proof}

\bibliographystyle{plain}
\bibliography{Lit_2}

\end{document}